\newcommand{\termin}[1]{{\it{#1}}}
\def\C{{\mathbb{C}}}
\def\F{{\mathbb{F}}}
\def\R{{\mathcal{R}}}
\def\A{{\mathcal{A}}}
\def\B{{\mathcal{B}}}
\def\Reals{{\mathbb{R}}}
\newtheorem{theorem}{Теорема}
\newtheorem{lemma}{Лемма}
\newenvironment{proof}{{$\rhd$}}{$\Box$\\}
\title{Об алгоритмической неразрешимости проблемы
вложимости алгебраических  многообразий над полем нулевой характеристики}
\author{
{А. Я. Канель-Белов, 
А. А. Чиликов}\footnote{College of Mathematics and Statistics, Shenzhen University, Shenzhen, 518061, China}
\footnote{ BIU}
\footnote{Московский Физико-Технический Институт,
факультет инноваций и высоких технологий,
лаборатория продвинутой комбинаторики и сетевых приложений
}
\footnote{Московский Государственный Технический
Университет им. Н.Э. Баумана,
факультет Информатика и системы управления,
кафедра ИУ-8 Информационная безопасность,
}
\footnote{Passware, Research Department,
chilikov@passware.com
}
}
\begin{document}

\maketitle

%


\begin{abstract}

Мы показываем, что для двух аффинных многообразий над произвольным полем
характеристики ноль не существует в общем виде алгоритма проверки наличия
вложения одного алгебраического многообразия в другое. Более того, мы
устанавливаем это для аффинных многообразий, чьи координатные кольца заданы
образующими и определяющими соотношениями.
Более того, одно из этих многообразий можно взять аффинным пространством,
а в случае поля вещественных чисел -- аффинной прямой.

\end{abstract}

\section{Введение}
\label{Intro}

Задача классификации алгебраических многообразий с точностью
до изоморфизма представляется одной из центральных задач
алгебраической геометрии. Чрезвычайно интересной и фундаментальной является
задача об алгоритмической разрешимости проверки наличия изоморфизма.

Близкой задачей является задача о вложимости многообразий.
В общем виде эта задача формулируется так:

\medskip
{\it Пусть $\A$ и $\B$ -- два алгебраических многообразия.
Определить, существует ли вложение $\A$ в $\B$.
}
\medskip

В случае, когда многообразия заданы каким-либо конструктивным способом
(например, системами уравнений и образующими), эта задача естественным
образом приводит к следующей -- придумать алгоритм, позволяющий
по заданным многообразиям установить существование вложения (даже без
его явного построения) или же отсутствие такового. Иными словами,
к вопросу об алгоритмической разрешимости проблемы вложимости.

Вопросы об алгоритмической разрешимости различных классов задач
в неявной форме ставились  еще в XIX веке (например, Десятая проблема
Гильберта о диофантовых уравнениях). Формализация понятия алгоритма,
проведенная в работах Тьюринга, Черча, Геделя и иных
авторов в 30-е годы XX века,
дала толчок к активному исследованию данных вопросов,
но к сожалению это не отразилось в должной мере на алгебраической геометрии.
Одним из наиболее
примечательных результатов, достигнутых в этой области, стало
полученное Ю.~В.~Матиясевичем
отрицательное решение  Десятой проблемы Гильберта
(\cite{Matiyasevich}). Исследования в этой области активно продолжаются
и в наши дни (см. например \cite{Denef}, \cite{ch-nah}, \cite{ch-kur},
\cite{ch-fpm1}, \cite{ch-fpm2}, \cite{ch-yu}, \cite{ch-eng17}).

В данной работе мы рассмотрим вопрос об алгоритмической
разрешимости задачи вложимости многообразий над полями $\Reals$ и $\C$.

Иными словами, о существовании алгоритма, позволяющего определить по
уравнениям, задающим $\A$ и $\B$, существует ли искомое вложение.

Полученные результаты естественным образом обобщаются на многие другие
поля нулевой характеристики.

Мы показываем, что для двух аффинных многообразий над произвольным полем
характеристики ноль не существует в общем виде алгоритма проверки
наличия вложения.

Мы это проверяем для аффинных многообразий, чьи координатные кольца
заданы образующими и определяющими соотношениями.

В частном случае, когда $\A = A$ -- аффинная прямая, соответствующее вложение
задается полиномами от одной переменной. Эти полиномы должны удовлетворять
уравнениям, задающим $\B$. При этом отображения не должны быть константными
(иначе все $\A$ отображается в одну точку $\B$, что не является вложением).
Соответственно, вопрос о существовании вложения эквивалентен вопросу
о существовании у системы уравнений, задающих $\B$, неконстантных решений
в кольце многочленов $\F[t]$ над основным полем $\F$.

В более общем случае, когда $\A = A^m$ -- $m$-мерная аффинная плоскость,
отображение задается полиномами от $m$ переменных. Для того, чтобы оно было
вложением, необходимо чтобы эти полиномы были алгебраически независимыми
(т.е. соответствующее расширение имело степень трансцендентности $m$).

В нашей работе установлено, что проблема вложимости алгоритмически
неразрешима для случая, когда основным полем является поле вещественных
чисел $\Reals$ или поле комплексных чисел $\C$.

Авторы выражают благодарность J. Kollar за предоставленные им новые
результаты об уравнениях Пелля и ценные замечания.

Данная работа была проведена с помощью
Российского Научного Фонда Грант N 17-11-01377.

\section{Предварительные сведения}
\label{Preliminaries}

\subsection{Уравнения Пелля для многочленов}
\label{Pre:Pell}

{\it Уравнением Пелля} над кольцом $\R$ называется уравнение вида
\begin{equation}
X^2 - \lambda Y^2 = 1
\end{equation}
где $\lambda \in \R$ -- параметр, а $X$ и $Y$ -- неизвестные.
Решения также ищутся в кольце $\R$.

Описание множества решений
уравнения Пелля над различными кольцами является интересной и сложной
задачей. В данном разделе мы приведем некоторые сведения об уравнениях
Пелля, поскольку они будут играть центральную роль в наших дальнейших
конструкциях.
Для наших целей будут интересны уравнения Пелля над
кольцом многочленов от одной или нескольких переменных над полем $\F$,
а также один из частных случаев уравнения Пелля -- уравнение
\begin{equation}
\label{eq:pell0}
X^2 - (T^2-1) Y^2 = 1
\end{equation}
где $T \in \R$ -- некоторый заданный параметр.

Решения уравнения $X^2 - \lambda Y^2 = 1$ для
произвольного $\lambda$ образуют абелеву группу относительно операции
$$\circ: (X,Y) \circ (X',Y') = (XX'+ \lambda YY', XY'+X'Y)
$$
В любом решении можно поменять знак как при $X$, так и при $Y$.
Смена знака при $Y$ приводит к получению обратного элемента
(относительно операции $\circ$), а одновременная смена знака при
$X$ и $Y$ -- к умножению
на решение $(-1,0)$, которое само является элементом порядка $2$
в этой группе. Единичным элементом в этой группе является $(1,0)$.

Для случая $\lambda = T^2-1$, где $T$ -- многочлен над $\F$, существует
также очевидное решение $(T,1)$. Применяя к нему операцию $\circ$,
получаем целую серию решений. Если $T$ -- многочлен, отличный от константы,
то все эти решения будут различными.

Значительно менее тривиальным является вопрос о точной структуре группы
решений. Оказывается, что в интересных нам случаях вся она (с точностью
до упомянутой выше замены знаков) порождается
одним единственным элементом (<<примитивным>> решением).

Похожие
утверждения
для различных случаев
доказаны в \cite{Denef}, \cite{Hazama}. Окончательное решение для
важного нам случая $\R = \F[t]$ и $\lambda = T^2-1$, где $T \in \F[t]\backslash \F$
было получено 
J. Kollar и сообщено авторам в частной
переписке (результат готовится к публикации).


Имеет место весьма нетривиальная 
\begin{theorem}[J. Kollar]
\label{kollar}
Если $T$ -- многочлен над $F$, не являющийся константой,
то группа (относительно операции $\circ$ решений (\ref{eq:pell0})
(как и в классическом целочисленном случае) порождена элементом $(T,1)$.

Иными словами, множество 
решений уравнения (\ref{eq:pell0}) представимо в виде
$\{\pm 1, 0\} \cup \{(\pm X_N, \pm Y_N)\}$, где
$X_N = \sum\limits_{k=0}^{[N/2]}
\binom{N}{2k} {(T^2-1)}^{k}T^{N-2k}$,
$Y_N = \sum\limits_{k=0}^{[N/2]}
\binom{N}{2k+1} {(T^2-1)}^{k}T^{N-1-2k}$ для некоторого целого
положительного $N$.
\end{theorem}

\subsection{Многочлены и делимость}
\label{Pre:Poly}


Нам
потребуется еще несколько вспомогательных утверждений,
связанных с делимостью многочленов от многих переменных.

Основным результатом данного раздела будет построение
полинома $\hat{P}$ и семейства полиномов $W_k$ от нескольких переменных,
удовлетворяющих \termin{условиям делимости} $W_k \mid (\hat{P} - 3k)$.
Эти полиномы будут существенно использованы в построениях
из раздела \ref{Case:Complex} (и не требуются для случая вещественных чисел).

Приводимые здесь рассуждения в целом основаны на идеях,
обсуждавшихся в \cite{lktg-94}, задача 2. Проект на конференции школьников
был основан на задаче, поставленной С.~В.~Конягиным: 
{\it  Дан многочлен $P(x)$ с целочисленными коэффициентами.
верно ли что найдется такое $n$, что все простые делители $P(n)$ меньше $n$.} 
С.~В.~Конягин решил задачу для многочлена вида $ax^n+b$. 
Идея, изложенная ниже позволяет решить ее для квадратного трехчлена. 
Общее решение нам не известно. 

Начнем с наблюдения:

\begin{lemma}
\label{q-mod}
Пусть $P(x_1, \ldots, x_n)$, $Q(x_1, \ldots, x_n)$ -- произвольные
полиномы от $n$ переменных над полем $\F$. Пусть
$$
P^{[1]}(x_1, \ldots, x_n, u) =
P(x_1+uQ(x_1,\ldots,x_n), \ldots, x_n+uQ(x_1,\ldots,x_n))
$$
Тогда существует $R(x_1, \ldots, x_n, u)$, такой, что
$$
P^{[1]}(x_1, \ldots, x_n, u) = P(x_1, \ldots, x_n) +
Q(x_1, \ldots, x_n) R(x_1, \ldots, x_n, u)
$$
\end{lemma}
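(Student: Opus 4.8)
The plan is to recognize that the claimed identity is nothing more than the congruence $P^{[1]} \equiv P \pmod{Q}$ in the polynomial ring $\F[x_1,\ldots,x_n,u]$: the existence of $R$ is precisely the assertion that $P^{[1]}-P$ lies in the ideal generated by $Q$. So I would work in the ring $A = \F[x_1,\ldots,x_n,u]$ and reduce everything modulo the principal ideal $(Q)$, where $Q = Q(x_1,\ldots,x_n)$ involves no $u$.

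The key observation is that, modulo $Q$, each substituted variable is unchanged, since $x_i + u\,Q(x_1,\ldots,x_n) \equiv x_i \pmod{Q}$. I would make this precise by introducing the $\F$-algebra homomorphism $\sigma\colon \F[x_1,\ldots,x_n] \to A$ defined on generators by $\sigma(x_i) = x_i + uQ$, so that $P^{[1]} = \sigma(P)$ holds by definition of the substitution. Composing with the quotient map $\pi\colon A \to A/(Q)$, both $\pi\circ\sigma$ and the restriction of $\pi$ to $\F[x_1,\ldots,x_n]$ send each generator $x_i$ to $\pi(x_i)$, because $\pi(Q)=0$. Two $\F$-algebra homomorphisms that agree on generators coincide, so $\pi(\sigma(P)) = \pi(P)$ for every $P$; equivalently $P^{[1]} - P \in (Q)$, which yields the required factorization $P^{[1]} - P = Q\cdot R$ with $R = R(x_1,\ldots,x_n,u)$.

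If one prefers an elementary argument avoiding quotient rings, I would instead reduce to the case of a single monomial $P = x_1^{e_1}\cdots x_n^{e_n}$, using that both the operation $P \mapsto P^{[1]}$ and the property of divisibility by $Q$ are $\F$-linear in $P$ (the remainders $R$ simply add). For a monomial, expanding each factor by the binomial theorem gives $(x_i + uQ)^{e_i} = x_i^{e_i} + Q\,S_i$ for some polynomial $S_i$, so every factor is congruent to $x_i^{e_i}$ modulo $Q$, and hence so is their product; collecting the terms that contain $Q$ exhibits $R$ explicitly.

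There is essentially no serious obstacle here: the content lies entirely in choosing the right point of view, namely that substituting $x_i \mapsto x_i + uQ$ is a ring homomorphism and therefore preserves congruences modulo $Q$. The only point requiring a line of care is justifying that polynomial evaluation commutes with the quotient map $\pi$, i.e.\ that $P^{[1]} = \sigma(P)$ is governed by the images of the generators alone; this is exactly the universal property of the polynomial ring and requires no computation.
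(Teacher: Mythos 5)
Your proof is correct. The paper itself states Lemma~\ref{q-mod} without any proof, treating it as immediate, so there is no argument in the text to compare against; your justification --- that substitution $x_i \mapsto x_i + uQ$ is an $\F$-algebra homomorphism which, composed with reduction modulo the ideal $(Q)$, agrees with the identity on generators, hence $P^{[1]} - P \in (Q)$ --- is a complete and correct way to fill this in, and your fallback via linearity in $P$ and the binomial expansion of $(x_i+uQ)^{e_i}$ is equally valid.
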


Из данной леммы вытекает 

%
\begin{theorem}
\label{p-h}
Пусть $C_1, \ldots, C_m$~-- некоторые константы из произвольного поля $\F$.
Тогда существуют
такие семейства полиномов $H_m(x_1,\ldots,x_m)$,
$P_m(x_1,\ldots,x_m)$
с коэффициентами из $\F$,
для которых одновременно выполняются
условия делимости
$H_k(x_1,\ldots,x_k) \mid ( P_m(x_1, \ldots, x_m) - C_k )$
при всех $k \in \{ 1, \ldots, m \}$, причем $H_m$ и $P_m$
существенно зависят от $x_m$ и при этом $H_s$ при $s<m$ от $x_m$ не зависят.
Оба эти семейства могут быть эффективно построены.
\end{theorem}
%
\begin{proof}
Индукция по $m$~-- числу переменных, $\vec{x}$ обозначает набор $(x_1,\ldots,x_m)$.

\medskip

{\bf База индукции.}
Для $m=1$ утверждение очевидно. Например,
можно положить $H_1(x_1) = x_1$, $P_1(x_1) = x_1+1$.

\medskip

{\bf Индуктивный переход.} Пусть для некоторого $m$ соответствующие
семейства уже построены. Положим $P^{[1]}_{mk} = P_m - C_k$ и
\begin{equation}
Q_m(\vec{x}) = \prod\limits_{k=1}^{m} P^{[1]}_{mk}(\vec{x})
= \prod\limits_{k=1}^{m} \left(P_m(\vec{x})-C_k\right)
\end{equation}
Рассмотрим полином
\begin{equation}
\tilde{P}_m(x_1,\ldots,x_m,u) =
P(x_1+uQ_m(\vec{x}), \ldots, x_m+uQ_m(\vec{x}))
\end{equation}
В силу леммы \ref{q-mod}
$\tilde{P}_m(\vec{x},u)$
представляется в виде
\begin{equation}
P_m(\vec{x})
+ Q_m(\vec{x}) R_m(\vec{x},u)
\end{equation}
Обозначая $\tilde{P}^{[1]}_{mk} = \tilde{P}_m - C_k$, получаем для него аналогичное
представление
\begin{equation}
\tilde{P}^{[1]}_{mk}(\vec{x},u) = P^{[1]}_{mk}(\vec{x})
+ Q_m(\vec{x}) R_m(\vec{x},u)
\end{equation}
По предположению индукции для $P_m$ выполнены условия делимости:
$H_k \mid P^{[1]}_{mk}$.
Поскольку $Q_m$ также делится на $P^{[1]}_{mk}$, имеем
\begin{equation}
\label{p'''-short}
\tilde{P}^{[1]}_{mk}(\vec{x},u) =
P^{[1]}_{mk}(\vec{x}) R'_{mk}(x_1,\ldots,x_m,u)
\end{equation}
где
\begin{equation}
\label{r'-short}
R'_{mk}(\vec{x},u) =
1 + R_m(\vec{x},u) \cdot
\frac{Q_m(\vec{x})}{P^{[1]}_{mk} (\vec{x})}
\end{equation}
Это означает, что полином
$\tilde{P}_m(\vec{x},u)$
делится на
$P^{[1]}_{mk}$, а следовательно, и на $H_k$ (при всех $k \in \{1, \ldots, m\}$).

Таким образом, сконструированный нами полином $\tilde{P}_m$ (от $m+1$ переменной)
выглядит подходящим кандидатом в качестве нового $P_{m+1}$.
Осталось лишь выбрать подходящий $H_{m+1}$, и добиться для выполнения
условия делимости для $k=m+1$.

Запишем искомое условие делимости: $H_{m+1} \mid \tilde{P}^{[1]}_{m,m+1}$. В силу
формул (\ref{p'''-short}) и (\ref{r'-short}) получаем
\begin{equation}
\tilde{P}^{[1]}_{m,m+1}(\vec{x},u) =
P^{[1]}_{m,m+1}(\vec{x}) R'_{m,m+1}(\vec{x},u)
\end{equation}
где
\begin{equation}
R'_{m,m+1}(\vec{x},u) =
1 + R_m(\vec{x},u) \cdot
\frac{Q_m(\vec{x})}{P^{[1]}_{m,m+1} (\vec{x})}
\end{equation}
Для выполнения условий делимости достаточно положить
\begin{equation}
\left\{
\begin{array}{c}
H_{m+1}(x_1,\ldots,x_m,x_{m+1}) = R'_{m,m+1}(x_1,\ldots,x_m,x_{m+1})
\\
P_{m+1}(x_1,\ldots,x_m,x_{m+1}) = \tilde{P}_{m}(x_1,\ldots,x_m,x_{m+1})
\end{array}
\right.
\end{equation}
Из построения ясно, что $H_{m+1}$ и $P_{m+1}$
(в отличии от $H_s$ при $s<m+1$) существенно зависят
от переменной $x_{m+1}$.

Дополненные семейства полиномов будут удовлетворять условиям делимости
при всех $k \in \{ 1, \ldots, m+1\}$, что и доказывает утверждение леммы.
\end{proof}
Теперь мы можем легко построить искомые полиномы $\hat{P}$ и $W_k$,
удовлетворяющие условию $W_k \mid (\hat{P} - 3k)$ при всех
$k \in \{1, \ldots, n\}$. Действительно,
для этого достаточно положить $C_k = 3k$, построить по теореме
\ref{p-h} семейства
$P_k$, $H_k$ длины $n$, и положить $W_k( x_1, \ldots, x_k ) =
H_k( x_1, \ldots, x_k )$ и $\hat{P}( x_1, \ldots, x_n ) =
P_n( x_1, \ldots, x_n )$.

%
%

\section{Случай вещественных чисел}
\label{Case:Real}

Нашей ближайшей целью будет установление алгоритмической неразрешимости
задачи о вложимости многообразий над полем $\Reals$. Для этого
мы рассмотрим более частный вопрос о вложимости аффинной прямой
в заданное многообразие, и сконструируем специальный класс многобразий,
для представителей которого невозможно установить существование
вложения по определяющим соотношениям.

Результаты данного параграфа сформулированы для случая основного
поля $\Reals$, но естественным образом обобщаются на случай
произвольного
упорядоченного
поля нулевой характеристики.

Здесь и далее мы будем существенно опираться на классические результаты
о неразрешимости диофантовых уравнений \cite{Matiyasevich}.
Назовем
\termin{семейством полиномов Матиясевича} семейтво полиномов
$Q(\sigma_1, \ldots, \sigma_{\tau}, x_1,\ldots, x_s)$ для которого
задача о существовании решения при данном наборе параметров полинома
алгоритмически неразрешима. Как установлено
в \cite{Matiyasevich}, такое семейство полиномов существует.

Теперь перейдем непосредственно к построению искомого класса многообразий.
Для этого нам потребуется несколько вспомогательных конструкций.

Рассмотрим аффинное пространство размерности $5d+1$. Координаты в этом
пространстве назовем $X_i,Y_i,Z_i,U_i,W_i$ ($1 \le i \le d$) и $T$.
Зададим многообразие $B_{(d)}$ при помощи системы образующих и соотношений.
\begin{equation}
\label{system:basic}
\left\{
\begin{array}{c}
X_i^2 - (T^2-1) Y_i^2 = 1
\\
Y_i - (T-1) Z_i = V_i
\\
V_i U_i = 1
\end{array}
\right.
\end{equation}
где $1 \le i \le d$.

Исследуем решения этой системы.

Очевидно, что при фиксированном значении $i$ допустимые значения
координат $X_i$, $Y_i$, $Z_i$, $U_i$, $W_i$ определяются
одним и тем же значением $T$.
Таким образом, многообразие представляет собой объединение <<слоев>>,
соответствующих различным значениям $T$. При этом каждый из этих
слоев является прямой суммой $d$ одинаковых многообразий, а именно,
допустимых решений <<короткой>> системы:
\begin{equation}
\label{system:short}
\left\{
\begin{array}{c}
X^2 - (T^2-1) Y^2 = 1
\\
Y - (T-1) Z = V
\\
V U = 1
\end{array}
\right.
\end{equation}
Решения же системы (\ref{system:short}) полностью описываются
следующим утверждением:
\begin{lemma}
\label{l:system:short}
Для любого решения системы (\ref{system:short}) выполняется:
\begin{enumerate}
\item
$U$ и $V$ -- ненулевые константы в $\F[t]$ ($\deg U = \deg V = 0$);
\item
Для $T$, $X$ и $Y$ возможны три случая:
\begin{enumerate}
\item
$T = \pm 1$, при этом $X = \pm 1$ и $Y$ -- любое;
\item
$T$ -- константа, отличная от $\pm 1$, при этом $X$ и $Y$ -- также
некоторые подходящие константы из $\F$ (при этом значение $X$ определяет
значение $Y$ с точностью до знака, и наоборот);
\item
\label{system:short:poly}
$T$ -- полином, отличный от константы, и при этом $X=\pm 1$, $Y=0$
или $X = \pm \hat{X}_N$, $Y = \pm \hat{Y}_N$, где
$$
\left\{
\begin{array}{c}
\hat{X}_N( T ) = \sum\limits_{k=0}^{[N/2]}
\binom{N}{2k} {(T^2-1)}^{k}T^{N-2k}
\\
\hat{Y}_N( T ) = \sum\limits_{k=0}^{[N/2]}
\binom{N}{2k+1} {(T^2-1)}^{k}T^{N-1-2k}
\end{array}
\right.
$$
для некоторого целого
положительного $N$.
\end{enumerate}
\end{enumerate}
\end{lemma}
\begin{proof}
Первая часть очевидна.

Вторая часть утверждения в случае \ref{system:short:poly} непосредственно
следует из теоремы \ref{kollar}.

В случае $T = \pm 1$ первое уравнение системы не накладывает
вообще никаких ограничений на $Y$. Второе уравнение, в свою очередь,
влечет $Y = ( T-1 )Z + V$. При любом выборе $V \in \F \setminus \{ 0 \}$
и $Z \in \F[t]$ соответствующее решение существует и единственно.

Если же $T$ -- константа, отличная от $\pm 1$
то прямое вычисление 
показывает, что $Y$ и $Z = ( Y-V ) / (T-1)$
также являются константами.

Более того, допустимое множество значений $X$ и $Y$ в этом случае
описывается ранее найденной последовательностью. Однако теперь значения
будут уже не полиномами, а константами.

\end{proof}

Таким образом,
структура полученного множества существенно зависит
от $T$.

Для случая \ref{system:short:poly}
заметим еще один важный факт.
\begin{lemma}
Если $\deg T > 0$, то $V = Y \bmod (T-1) = N$ для некоторого целого $N$
и $Z = ( Y-N ) / (T-1)$.
\end{lemma}
\begin{proof}
При $\deg T > 0$ уравнение $Y - (T-1) Z = V$ в совокупности с
ранее показанным условием $\deg V = 0$ означает, что
остаток от деления многочлена $Y$ на многочлен $T$ есть в точности
константа $V \in \F$.  Легко видеть,
что в выражении для $Y$ все слагаемые делятся на $T-1$ без остатка,
за исключением слагаемого, отвечающего $k=0$. Оно равно
$\binom{N}{1} T^{N-1} = N\bmod (T-1)$.
Таким образом, значение $V$
в любом решении исходной системы должно быть целым числом $N$.
Прямой подсчет показывает, что $Z = ( Y-N ) / (T-1)$.
\end{proof}
Таким образом, слои построенного множества $\B_{(d)}$ обладают
понятной структурой. Возможны три случая.

\begin{enumerate}
\item
При $\deg T > 0$ каждому набору
целых чисел $N_i$ соответствуют решения $Y_i$ и $X_i$,
являющиеся многочленами, и определеные с точностью до знака,
а также константы $V_i = N_i$ и $U_i = 1/V_i$ и
$Z_i = ( Y_i-V_i ) / (T-1)$. 
\item
При $\deg T = 0$ и $T \neq \pm 1$
имеем константные решения для $Y_i$, выбираемые из заданной
последовательности. Значения $X_i$, $Z_i$, $V_i$ и $U_i$
также являются константами и определяются выбранными значениями
$Y_i$. 
\item
При $T = \pm 1$ получаем  $X_i = \pm 1$, для произвольно выбранных
констант $V_i$ и полиномов $Z_i$ определяем $U_i = 1/V_i$ и
$Y_i = (T-1) Z_i + V_i$. 
\end{enumerate}

Все вышеприведенные рассуждения справедливы для произвольного
основного поля $\F$. Теперь используем специфику случая $\F = \Reals$.

Перейдем к пространству размерности $5d+2$, обозначим новую координату
через $S$, и дополним основную систему (\ref{system:basic}) уравнением
\begin{equation}
\label{eq:square}
T = S^2+2
\end{equation}
Далее заметим, что из $T = S^2 +2$ следует, что случай $T = \pm 1$
невозможен. Таким образом, все общие решения систем
(\ref{system:basic}) и (\ref{eq:square}) либо являются константами
(случай $\deg T = 0$, $T \neq \pm 1$), либо отвечают некоторому
целочисленному набору параметров $(N_1,\ldots,N_d)$.
Назовем первые решения <<плохими>>, а вторые -- <<хорошими>>.

Рассмотрим семейство полиномов Матиясевича
$Q(\sigma_1, \ldots, \sigma_{\tau}, x_1, \ldots, x_s)$.
Пусть $d \le s$.
Тогда, добавив новое уравнение
\begin{equation}
\label{eq:matiy}
Q_{\sigma}(V_1, \ldots, V_s) = 0
\end{equation}
к системам (\ref{system:basic}) и (\ref{eq:square}),
мы получаем систему, задающую новое многообразие. Обозначим его
$\B'_{(d), \sigma}$.

Если $Q_{\sigma} = 0$ не имеет целочисленных решений,
то у исходной системы нет <<хорших>> решений. Тогда многообразие
$\B'_{(d), \sigma}$ нульмерно, и вложений из $A$ в $\B'_{(d)}$ не существует.

В противном случае для каждого решения $N_1, \ldots N_s$ строятся
явно
неконстанные полиномы $S(t)$,
$T(t)$, $Y_i(t)$, $X_i(t)$ и $Z_i(t)$, являющиеся решениями.
Например, можно положить $S(t) = t$, $T(t) = t^2+2$, и определить $X_i$,
$Y_i$ и $Z_i$ по формулам для случая \ref{system:short:poly} из
леммы \ref{l:system:short}. Также однозначно определяются константы
$V_i = N_i$ и $U_i = 1/N_i$.
В совокупности эти функции
задают вложение прямой в многообразие $\B'_{(d), \sigma}$.

Поскольку задача о существовании целочисленных решений для $Q_{\sigma}$
алгоритмически неразрешима, то алгоритмически неразрешима и задача
о вложимости $A$ в $\B'_{(d), \sigma}$ (в частности, в $\B'_{(s), \sigma}$).
Входом при этом являются уравнения, задающие $\B'_{(d), \sigma}$.

Таким образом, нами доказана
\begin{theorem}
Задача о вложимости аффинной прямой над $\Reals$ в произвольное
алгебраические многобразие $\B$ (заданное образующими и соотношениями)
алгоритмически неразрешима.
\end{theorem}
Отсюда сразу следует:
%
\begin{theorem}
Задача о вложимости произвольного алгебраического многобразия $\A$ над
$\Reals$ в произвольное
алгебраические многобразие $\B$ 
алгоритмически неразрешима.
\end{theorem}
%

\section{Случай комплексных чисел}
\label{Case:Complex}

В комплексном случае ситуация сложнее, чем в вещественном.
В самом деле, рассуждение для $\Reals$ существенно опирается на тот
факт, что уравнение $S^2+a$ не имеет решений при $a > 0$
(отюда следует, что $T = S^2+2 \neq \pm 1$). Однако в поле комплексных
чисел это неверно. Таким образом, мы не можем исключить случай
$T = \pm 1$ и $X_i = \pm 1$. В этом случае $Y_i$
может быть задано, вообще говоря, произвольным образом.

Поэтому в случае основного поля $\C$ мы немного усложним рассуждение.
А именно, рассмотрим вопрос о вложимости аффинного пространства
$A^m$ в заданное многообразие $\B$. Нашей целью будет, как и раньше,
сконструировать
такой класс многообразий, для представителей которого
(при некотором подходящем $m$)
невозможно установить существование искомого
вложения по определяющим соотношениям.

Результаты данного параграфа сформулированы для случая основного
поля $\C$, но
проходят и для 
произвольного
поля нулевой характеристики, поскольку все коэффициенты у наших полиномов целые.

Зададим алгебраическое многообразие $\B_{(d,e)}$ при помощи системы образующих и соотношений.
\begin{equation}   \label{system:huge}
\left\{
\begin{array}{c}
X_{ij}^2 - (T_j^2-1) Y_{ij}^2 = 1
\\
Y_{ij} - (T_j-1) Z_{ij} = V_{ij}
\\
V_{ij} U_{ij} = 1
\\
T_{j+1} = \prod\limits_{k=1}^{j} \left( (T_k^2-1) W_k \right) W_{j+1}
\\
T_1 = \hat{P}( W_1, \ldots, W_n )
\end{array}
\right.
\end{equation}
где $1 \le i \le d$, $1 \le j \le e$.
При этом полином $\hat{P}$
выберем таким образом, чтобы $W_j$ был делителем
$\hat{P}(W_1,\ldots,W_n)-3j$.
Полиномы с нужными свойствами были построены в разделе \ref{Pre:Poly}
(см. лемму \ref{p-h}).


%
%
Иными словами, мы <<клонируем>> основную систему (\ref{system:basic})
из предыдущего
параграфа в большом числе экземпляров, и дополняем ее <<связующими>>
соотношениями между параметрами $T_j$. Исследуем решения этой системы
в $\C[t_1,\ldots,t_m]$.

Совершенно очевидно, что соотношения для $X_{ij}$, $Y_{ij}$, $Z_{ij}$,
$U_{ij}$, $V_{ij}$ при фиксированном $T_j$ аналогичны ранее рассмотренным.
Иными словами, при фиксированном наборе $T_j$ множество решений
есть прямая сумма слоев $\B_{(d)}$, ранее уже изученных. Однако при различных
$T_j$ соответствующие слои ведут себя по-разному.
При этом наличие связей
между $T_j$ говорит о том, что возможно, не все варианты допустимы.

Как показывают результаты предыдущего раздела, для каждого $j$ имеют место
следующие важные случаи:
\begin{enumerate}
\item
$T_j = \pm 1$;
\item
$\deg T_j = 0, T_j \neq \pm 1$;
\item
$\deg T_j > 0$.
\end{enumerate}

Далее в тексте мы будем обозначать через $C_i$ константы из основного поля
$\C$ (в частности, полиномы степени $0$).


Для дальнейших рассуждений удобно сформулировать и доказать еще одно
вспомогательное утверждение:

\begin{lemma}      \label{reverse}
Пусть для некоторого $N$ выполнено
$T_N = C_N \neq 0$.
Тогда все $W_k$
при $k \le N$
и все $T_k$ при при $k \le N-1$ являются константами.
\end{lemma}

\begin{proof}
Заметим, что произведение нескольких многочленов из $\F[t]$
является ненулевой константой в том и только в том случае, когда
все сомножители являются также ненулевыми константами.
Теперь докажем утверждение по индукции.

\medskip
{\bf База индукции.} При $N = 0$ имеем
$T_1 = W_1 = C_1 \neq 0$ и утверждение очевидно.
\medskip

{\bf Индуктивный переход.} Пусть для $N-1$ утверждение верно, и пусть
$T_N = C_N \neq 0$. Поскольку
$T_N = \prod\limits_{k=1}^{N-1} \left( (T_k^2-1) W_k \right) W_N = C_N \neq 0$,
сразу получаем что $W_N$ -- ненулевая константа. Также все
$(T_k^2-1)$ являются ненулевыми константами. Следовательно,
и все $T_k$ -- тоже константы. В частности, $T_{N-1}$ -- константа.
Если при этом $T_{N-1} \neq 0$, то утверждение леммы для $N$ следует
из утверждения леммы для $N-1$. Таким образом, осталось показать,
что $T_{N-1} \neq 0$.

Но если $T_{N-1} = 0$, то поскольку $T_{N} = T_{N-1} (T_{N-1}^2-1) W_N$,
то и $T_N = 0$, что невозможно. Таким образом, лемма \ref{reverse} доказана.
\end{proof}


Теперь необходимо рассмотреть несколько случаев.

\medskip
{\bf{Первый случай:}} среди $T_j$ нет констант. В этом случае, все компоненты
слоя соответствуют ранее изученным множествам $\B_{(d)}$ (для первого случая
из предыдущего параграфа). Они параметризуются
целочисленными наборами $N_{ij}$ независимо друг от друга. Слой является
прямой суммой соответствующих компонент.

\medskip
{\bf{Второй случай:}} среди $T_j$ есть ненулевые константы, ни одна из которых
не равна $\pm 1$. Пусть $\hat{j}$ -- максимальный из индексов этих констант.
Тогда в силу леммы \ref{reverse} при $j <\hat{j}$ мы имеем $T_{j} = C_j$.
Иными словами, все предыдущие $T_j$ -- также константы. Одновременно
с этим и $W_j$ при $j \le \hat{j}$ также будут константами.

В частности,
$T_1 = \hat{P}(W_1,\ldots,W_n)$ также будет ненулевой константой $C_1$.
Поскольку при этом все $W_k$ являются делителями
$\hat{P}(W_1,\ldots,W_n)-3k = C_1-3k$, то все они, за исключением,
возможно, одного значения, также являются ненулевыми константами.

Этим единственным значением, очевидно, должно быть $\hat{j}+1$
(разумеется, при $\hat{j} < e$). Действительно, если $\hat{j} < e$
и $W_{\hat{j}+1}$ -- ненулевая константа, то и
$T_{\hat{j}+1} = T_{\hat{j}} (T_{\hat{j}}^2-1) W_{\hat{j}+1}$
также будет ненулевой константой ($T_{\hat{j}} \neq \pm 1$),
что противоречит выбору $\hat{j}$. Далее возможны еще два случая:
$W_{\hat{j}+1} = 0$ или же $W_{\hat{j}+1}$ -- неконстантный полином.
В первом случае все последующие $T_j$ будут нулями, а во втором
они будут однозначно определяться значениями констант $W_j$
и полиномом $W_{\hat{j}+1}$.

\medskip
{\bf{Третий случай:}} среди $T_j$ есть константы, хотя бы одна из которых
равна $\pm 1$. Обозначим соответствующий индекс через $\hat{j}$.
В этом случае
$T_{\hat{j}}^2-1 = 0$ и
при всех $j > \hat{j}$ получаем
$T_{j} = \prod\limits_{k=1}^{j-1} \left( (T_k^2-1) W_k \right) W_j = 0$.

В силу леммы \ref{reverse} при $j <\hat{j}$ мы имеем $T_{j} = C_j$.
При этом $C_j \neq \pm 1$ (иначе $C_{j+1} = 0$). Аналогично и все
$W_j$, за исключением, возможно, одного значения, также являются
ненулевыми константами.


Рассмотрим  семейство {\it полиномов Матиясевича}
$Q(\sigma, x_1, \ldots, x_s)$.
Проблема существования решения диофантова
уравнения $Q(\sigma, V_{1j}, \ldots, V_{sj}) = 0$
алгоритмически неразрешима.
Пусть $d \le s$.
Тогда, добавив новые уравнения $Q(\sigma, V_{i1}, \ldots, V_{is}) = 0$
к системе (\ref{system:huge}),
мы получаем систему, задающую новое многообразие. Обозначим его
$\B'_{(d,e),\sigma}$.

\begin{lemma}
\label{l:no-embed}
Если $Q_{\sigma} = 0$ не имеет целочисленных решений,
то при $m \ge d+2$
не существует вложения $\A = A^m$ в $\B'_{(d,e),\sigma}$.
\end{lemma}
\begin{proof}
Пусть искомое вложение существует. Тогда оно задается некоторой
системой полиномов от $m$ переменных. Соответствующие полиномы
будут решениями системы (\ref{system:huge}). Значения координат
$U_{ij}$, $V_{ij}$ будут константами.

Поскольку $Q_{\sigma} = 0$ не имеет целочисленных решений,
то у исходной системы нет решений для которых $\deg T_1 > 0$.
Тогда возможные решения соответствуют второму либо третьему из
расмотренных выше случаев. Далее рассмотрим их отдельно.

Если имеет место
второй случай, то все $W_i$, за исключением, быть может, одного,
являются константами. Значения $W_i$ однозначно парамеризуют
все значения $T_j$. Значения $T_j$ в этом случае однозначно
определяют все остальные кординаты. Таким образом, образ
соответствующего отображения имеет размерность $1$. Следовательно,
при $m \ge 2$ оно не может быть вложением.

В третьем случае также все $W_i$, за исключением, быть может, одного,
являются константами. Значения $W_i$ снова однозначно парамеризуют
все значения $T_j$. Однако при $T_{\hat{j}} = \pm 1$ для компоненты
с номером $\hat{j}$ мы находимся в условиях третьего случая из
раздела \ref{Case:Real}. В этом случае $Z_{i\hat{j}}$ могут быть
выбраны любыми, остальные же переменные однозначно ими определяются.
Еще одна свободная переменная $W_{\hat{j}+1}$ может возникнуть
по тем же соображениям, что и в предыдущем случае. Таким образом, образ
соответствующего отображения имеет размерность $\le d+1$.
Следовательно, при $m \ge d+2$ оно не может быть вложением.
\end{proof}

\begin{lemma}
\label{l:embed}
Если $Q_{\sigma}$ имеет целочисленные решения, то существует
вложение аффинного пространства  $\A=A^s$ в многообразие
$\B'_{(d,s),\sigma}$.
\end{lemma}
\begin{proof}
%
Поскольку $Q_{\sigma}$ имеет целочисленные решения, то для каждого такого
решения $N_1, \ldots, N_s$ по любому заданному набору $T_j$, $1 \le j \le s$,
могут быть
построены
явно функции $U_{ij}(T_j)$, $V_{ij}(T_j)$,
$Y_{ij}(T_j)$, $X_{ij}(T_j)$ и $Z_{ij}(T_j)$, являющиеся решениями.
Для окончания построения нужно осталось показать, что можно подобрать
подходящие $T_j$ (и $W_j$), чтобы выполнялись связывающие их уравнения.

Заметим, что $T_j$ однозначно определяются по $W_j$. Сами же $W_j$
нами уже построены при помощи полиномов $H_j$ из теоремы \ref{p-h}.
При этом каждый из $H_j$ зависит от переменных $x_1, \ldots, x_j$.
Таким образом, задавая произвольную параметризацию для $x_j$ мы получим
подходящий полином $H_j$ (а значит и $W_j$). Таким образом,
$W_j$ алгебраически независимы. Следовательно, в совокупности
с остальными координатными функциями, они задают некоторое вложение
аффинного пространства  $\A=A^s$ в многообразие $\B'_{(d,s),\sigma}$.
\end{proof}

Заметим, что при доказательстве отсутствия вложения в лемме \ref{l:no-embed}
мы наложили некоторое дополнительное условие на $d$, но не на $e$.
Напротив, для существования вложения (при наличии решений у $Q_{\sigma}$)
в лемме \ref{l:embed}
ограничения накладываются на $e$, но не на $d$. Это дает нам возможность
согласовать параметры. Для этого достаточно положить $m=e=s$ и $d=s-2$.
В этом случае $A^s$ вложимо в $\B'_{(s-2,s),\sigma}$, если $Q_{\sigma}$ имеет
целочисленные решения, и не вложимо в противном случае.

%

Поскольку задача о существовании целочисленных решений для $Q_{\sigma}$
алгоритмически неразрешима, то алгоритмически неразрешима и задача
о вложимости $A^s$ в $\B'_{(s-2,s),\sigma}$ (входом при этом являются
уравнения,
задающие $\B'_{(s-2,s),\sigma}$).

Таким образом, нами доказана
\begin{theorem}
Существует целое положительное $s$, для которого задача о вложимости
аффинного пространства $A^s$ над $\C$ в
алгебраические многобразие $\B'_{(s-2,s),\sigma}$
алгоритмически неразрешима.
\end{theorem}
Отсюда сразу следуют:
\begin{theorem}
Существует целое положительное $s$, для которого задача о вложимости
аффинного пространства $A^s$ над $\C$ в произвольное
алгебраические многобразие $\B$ (заданное образующими и соотношениями)
алгоритмически неразрешима.
\end{theorem}
и
%
\begin{theorem}
Задача о вложимости произвольного алгебраического многобразия $\A$ над
$\C$ в произвольное
алгебраические многобразие $\B$ 
алгоритмически неразрешима.
\end{theorem}
%

\section{Заключение}

В работе показана алгоритмическая неразрешимость проблемы
вложимости двух алгебраических многообразий над полями вещественных чисел
$\Reals$ и комплексных чисел $\C$. Построен класс многообразий, для
представителей которого алгоритмически неразрешима проблема существования
вложения аффинной прямой над $\Reals$. Построен класс многообразий, для
представителей которого алгоритмически неразрешима проблема существования
вложения аффинного пространства размерности $m$ над $\C$ для достаточно
большого $m$.

Результаты для $\C$ обобщены на случай произвольного поля
нулевой характеристики,
а для $\Reals$ -- на случай произвольного упорядоченного поля
нулевой характеристики.



\newcommand{\by}[1]{{\it{#1}~}}
\newcommand{\paper}[1]{{\rm{#1}. }}

\def \journ{}
\def \jour{}
\def \book{}
\def \yr{}
\def \vol{Vol}
\def \no{№}
\def \pages{p}


\begin{thebibliography}{99} 


\bibitem{Matiyasevich}
  \by{Матиясевич~Ю.~В.}
  \paper{Десятая проблема Гильберта}
  Москва, ``Наука'', 1993.

\bibitem{Denef}
  \by{Denef~J.}
  \paper{The diophantine problem for polynomial rings
  and fields of rational functions}
  Trans. Amer. Math. Soc. 242 (1978), p. 391-399

\bibitem{Hazama}
  \by{Hazama~F.}
  \paper{Pell equations for polynomials}
  Ingad. Mathem. N.S. 8 (3) (1997), p. 387-397

\bibitem{ch-nah}
  \by{Белов~А.~Я., Чиликов~А.~А.}
  {\sl Об алгоритмической разрешимости экспоненциально-диофантовых уравнений
	в кольцах положительной характеристики\/},
    Математические методы и приложения. Четвертый математический Симпозиум.
    М.:1997, с. 78--84.

\bibitem{ch-kur}
  \by{Белов~А.~Я., Чиликов~А.~А.}
  {\sl Об алгоритмической разрешимости экспоненциально-диофантовых уравнений
	в кольцах положительной характеристики\/},
    Курошевская Алгебраическая Конференция-98. Тезисы докладов.
    Под редакцией Ю.~А.~Бахтурина, А.~И.~Кострикина, А.~Ю.~Ольшанского ---
    М.:1998, с. 144-145.

\bibitem{ch-fpm1}
   \by{Белов~А.~Я., Чиликов~А.~А.}
   {\sl Экспоненциальные диофантовы
	уpавнения в кольцах положительной хаpактеpистики\/},
    Фундаментальная и Прикладная Математика, с.649-668, Т.6, Выпуск 3, 2000.

\bibitem{ch-fpm2}
   \by{Чиликов~А.~А.}
   {\sl Ряды Тейлора алгебраических функций над полями
	положительной характеристики.\/},
    Фундаментальная и Прикладная Математика, с.235-256, Т.7, Выпуск 1, 2001.

\bibitem{ch-yu}
   \by{Чиликов~А.~А.}
   {\sl Ряды Тейлора алгебраических функций над полями
	положительной характеристики.\/},
   Международный алгебраический семинар, посвященный 70-летию
   научно-исследовательского семинара МГУ по алгебре, основанного О. Ю.
   Шмидтом. 2000. Тезисы докладов.
   М.:2000, c. 56-58.

\bibitem{ch-eng17}
  \by{A.~Ya.~Kanel-Belov., А.~А.~Chilikov}
  {\sl
    Normal basises of algebras and Exponential Diophantine equations
    in rings of positive characteristic\/},
    https://arxiv.org/pdf/1712.00508.pdf

\bibitem{lktg-94}
  {\sl
    Шестая Летняя Конференция Турнира Городов\/},
https://olympiads.mccme.ru/lktg/1994/all.htm


\end{thebibliography}
\end{document}